\newcommand{\eq}{\approx}
\newcommand{\m}[1]{{\bf {#1} }}
\newcommand{\si}{\ensuremath{\sigma}}
\DeclareMathOperator{\Con}{\mathrm{Con}}
\DeclareMathOperator{\KCon}{\mathrm{KCon}}
\newcommand{\cg}[1]{{\rm Cg}_{_{#1}}}
\newcommand{\mdl}[1]{\models_{#1}}
\newcommand{\lgc}[1]{\ensuremath{{\sf #1}}}
\newcommand{\De}{\Delta}
\newcommand{\Si}{\Sigma}
\newcommand{\The}{\Theta}
\newcommand{\F}{\m{F}}
\newcommand{\Tm}{\m{Tm}}
\newcommand{\lang}{\mathcal{L}}
\newcommand{\x}{\overline{x}}
\newcommand{\y}{\overline{y}}
\newcommand{\z}{\overline{z}}
\newcommand{\eps}{\ensuremath{\varepsilon}}
\newcommand{\N}{\mathbb{N}}
\newcommand{\V}{\mathcal{V}}
\begin{document}

\begin{frontmatter}
  \title{Coherence in Modal Logic}
    \author{Tomasz Kowalski}
  \address{Department of Mathematics and Statistics\\La Trobe University, Melbourne, Australia\\T.Kowalski@latrobe.edu.au}
  \author{George Metcalfe}\footnote{The second author acknowledges support from Swiss National Science Foundation grant 200021$\_$165850 and the EU Horizon 2020 research and innovation programme under the Marie Sk{\l}odowska-Curie grant agreement No 689176.}
  \address{Mathematical Institute\\University of Bern, Switzerland \\ george.metcalfe@math.unibe.ch}
  
  \begin{abstract}
  A variety is said to be coherent if the finitely generated subalgebras of its finitely presented members are also finitely presented. In~\cite{KM18} it was shown that coherence forms a key ingredient of the uniform deductive interpolation property for equational consequence in a variety, and a general criterion was given for the failure of coherence (and hence uniform deductive interpolation) in varieties of algebras with a term-definable semilattice reduct. In this paper, a more general criterion is obtained and used to prove the failure of coherence and uniform deductive interpolation for a broad family of modal logics, including $\lgc{K}$, $\lgc{KT}$, $\lgc{K4}$, and $\lgc{S4}$.
  \end{abstract}

  \begin{keyword}
 Modal Logic, Coherence, Uniform Interpolation, Model Completion, Free Algebras, Compact Congruences.
  \end{keyword}
 \end{frontmatter}


\section{Introduction}\label{s:introduction}

A variety --- equivalently, a class of algebras defined by equations --- is said to be {\em coherent}\, if every finitely generated subalgebra of a finitely presented member of  the variety is again finitely presented. The notion of coherence originated in sheaf theory and has been studied quite widely in algebra, mostly in connection with groups, rings, modules, monoids, and lattices (see, e.g.,~\cites{ES70,CLL73,Sch83,Gou92}). It has also been considered from a more general model-theoretic perspective by Wheeler~\cites{Whe76,Whe78}, who proved, among other things, that coherence of a variety is implied by, and in conjunction with amalgamation and a further property implies, the existence of a model completion for its first-order theory. 

In~\cite{KM18} it was shown that there exists a close relationship between coherence and the logical notion of {\em uniform interpolation}. Following~\cite{vGMT17}, right uniform deductive interpolation is defined for equational consequence in a variety $\V$ as an extension of  deductive interpolation (studied in,~e.g.,~\cites{Jon65,Pig72,Mak77,Cze85,MMT14}) and  related to properties of compact congruences on the free algebras of $\V$. It was proved in~\cite{KM18}  that right uniform deductive interpolation is equivalent to the conjunction of deductive interpolation and coherence. Since  deductive interpolation is equivalent to amalgamation in the presence of the congruence extension property (see, e.g.,~\cite{MMT14}), coherence provides the key extra ingredient for an algebraic characterization of right uniform deductive interpolation. 

In this paper, we investigate coherence in {\em modal logic}: more precisely, we establish the failure of this property --- and hence also the failure of uniform deductive interpolation  --- for broad families of normal modal logics.

Following Pitts' seminal proof of uniform interpolation for intuitionistic propositional logic $\lgc{IPC}$~\cite{Pit92}, uniform interpolation was  established also for a number of important modal logics. In particular, semantic proofs making use of bisimulation quantifiers  were given by Visser in~\cite{Vis96} for the basic normal modal logic $\lgc{K}$ (see also~\cite{GZ02}), Grzegorczyk logic $\lgc{S4Grz}$, and G{\"o}del-L{\"o}b logic $\lgc{GL}$ (first proved by Shavrukov~\cite{Sha93}), and syntactic Pitts-style proofs were given by B{\'i}lkov{\'a} in~\cite{Bil07} for $\lgc{K}$ and $\lgc{KT}$. Relationships between uniform interpolation and bisimulation quantifiers for the modal $\mu$-calculus and other fixpoint modal logics have been studied in some depth in~\cites{dAH00,dAg05,MSV15}.  

Crucially for the topic of this paper, however, the above-mentioned proofs establish  an  ``implication-based'' uniform interpolation property, that for $\lgc{IPC}$, $\lgc{GL}$, and $\lgc{S4Grz}$ implies the ``consequence-based'' uniform deductive interpolation property studied in~\cites{vGMT17,KM18} (and implicitly in~\cite{GZ02}), but not for $\lgc{K}$ or $\lgc{KT}$. Indeed, it is proved in~\cite{KM18} that any coherent variety of modal algebras must have equationally definable principal congruences (EDPC) or, equivalently, that the corresponding modal logic must be weakly transitive, i.e., admit as a theorem $\boxdot^n x \to \boxdot^{n+1} x$ (where $\boxdot x := x \land \Box x$)  for some $n\in\N$. Since this is not the case for $\lgc{K}$ or $\lgc{KT}$, these logics are not coherent and do not admit uniform deductive interpolation. In the case of $\lgc{K}$, the failure of uniform deductive interpolation was already observed (at least implicitly) in~\cite{GLW06}, where it was shown that the description logic $\mathcal{ALC}$, a notational variant of multi-modal $\lgc{K}$,  does not have this property; note also that in~\cite{LW11} an algorithmic characterization was given of the formulas of this logic that do admit deductive uniform interpolants.

Failure of coherence for a family of non-weakly transitive modal logics (also substructural logics, bi-intuitionistic logic, and lattices) was established in~\cite{KM18} via a general criterion stating   that in a coherent variety $\V$ of algebras with a term-definable semilattice  reduct, any increasing and monotone term satisfying a certain fixpoint embedding condition in $\V$ admits a fixpoint obtained by iterating the term finitely many times. It was left open in~\cite{KM18}, however, as to whether this criterion could be used to show also the failure of coherence for  weakly transitive modal logics. Ghilardi and Zawadowski proved in~\cite{GZ95} that $\lgc{S4}$ does not admit uniform interpolation, and in~\cite{GZ02} gave a description of the axiomatic extensions of this logic admitting a model completion. In this paper, we obtain similar results using a  more general criterion for the failure of coherence that allows extra variables in terms satisfying the fixpoint embedding condition in some variety of modal algebras. 

In Section~\ref{s:interpolation}, we provide the required algebraic background and recall the relationship between uniform deductive interpolation and coherence established in~\cite{KM18}. In Section~\ref{s:criterion}, we then state and prove our new criterion for the failure of coherence. Finally, in Section~\ref{s:modal}, we apply the criterion with carefully chosen terms to obtain failures of coherence for broad families of non-weakly transitive and weakly transitive modal logics, including $\lgc{K}$, $\lgc{KT}$, $\lgc{K4}$, and $\lgc{S4}$. Failure of coherence implies failure of uniform deductive interpolation and  absence of a model completion, so our  results overlap with those obtained by Ghilardi and Zawadowski in~\cites{GZ95,GZ02}. However, our proofs are arguably simpler, since they require only finding a suitable term; moreover, the method is not confined to modal logics but can be applied to many other families of non-classical logics and varieties of algebras.


\section{Uniform Deductive Interpolation and Coherence} \label{s:interpolation}

In this section, we recall a general account of uniform deductive interpolation for varieties of algebras that was first presented in~\cite{vGMT17}, and a relationship, obtained in~\cite{KM18},  between right uniform deduction interpolation and the model-theoretic notion of coherence. For logics admitting a variety as an equivalent algebraic semantics (in particular, normal modal logics), these notions and results can also be easily translated into a logical setting. 

Let us fix an algebraic signature $\lang$ with at least one constant symbol\footnote{The restriction to one constant symbol is not necessary but simplifies certain aspects of the presentation.} and a variety of $\lang$-algebras $\V$. Given any set of variables $\x$, denote by $\Tm(\x)$ the \emph{$\lang$-term} {\em algebra over $\x$} and by $\F(\x)$ the {\em free algebra of $\V$ over $\x$}, which may be defined as the quotient of  $\Tm(\x)$ by the congruence $\Theta_\V$ defined by $s\,\Theta_\V\,t\, :\Leftrightarrow\, \V \models s \eq t$. We write $t(\x)$,  $\eps(\x)$, or $\Si(\x)$ to denote that the variables of an $\lang$-term $t$, $\lang$-equation $\eps$, or set of $\lang$-equations $\Si$, respectively, are included in $\x$. We also use $t$ to denote the corresponding $\Theta_\V$ equivalence class of $t$ in $\F(\x)$, relying on context to avoid ambiguity; similarly, we use $\eps$ and $\Si$ to denote corresponding pairs of elements and sets of pairs of elements, respectively, of $\F(\x)$. We assume throughout the paper that $\x$, $\y$, etc. denote disjoint sets, and that $\x,\y$ denotes their disjoint union.   

Consequence in $\V$ is defined as follows. For a set of $\lang$-equations $\Si \cup \{\eps\}$ containing exactly the variables in the set $\x$, let 
\[
\begin{array}{rcc}
\Si \mdl{\V} \eps & \,:\Longleftrightarrow\,  & \text{for every $\m{A} \in \V$ and
homomorphism $e 
\colon \Tm(\x) \to \m{A}$,}\\[.05in] 
& &  \Si \subseteq \ker(e) \ \Longrightarrow \ \eps \in \ker(e).
\end{array}
\]
For a set of $\lang$-equations $\Si \cup \De$, we write $\Si \mdl{\V} \De$ if $\Si \mdl{\V} \eps$ for all $\eps \in \De$. 

We say that $\V$ admits {\it deductive interpolation} if for any sets $\x,\y,\z$ and set of equations $\Si(\x,\y) \cup \{\eps(\y,\z)\}$ satisfying $\Si \mdl{\V} \eps$, there exists a set of equations $\Pi(\y)$ satisfying
\[
\Si \mdl{\V} \Pi \ \text{ and } \ \Pi \mdl{\V} \eps.
\]
Deductive interpolation and its relationships with other logical and algebraic properties have been investigated by many authors (see, e.g.,~\cites{Jon65,Pig72,Mak77,Cze85,MMT14}). In particular, it is well known (see, e.g.,~\cite{MMT14}) that if $\V$ has the amalgamation property, then it admits deductive interpolation and, conversely, if $\V$ admits deductive interpolation and has the congruence extension property, then it has the amalgamation property. 
 
Observe now (or consult~\cite{vGMT17}*{Proposition~2.10}) that $\V$ admits deductive interpolation if and only if for any finite sets $\x,\y$ and finite set of equations $\Si(\x,\y)$, there exists a set of equations $\Pi(\y)$ such that for any equation $\eps(\y,\z)$, 
\[ 
\Si \mdl{\V} \eps \iff \Pi \mdl{\V} \eps.
\]
Following~\cite{vGMT17}, we say that $\V$ admits {\em right uniform deductive interpolation} if $\Pi(\y)$ in the preceding condition is required to be finite.\footnote{Note that a similar definition can be given for left uniform deductive interpolation (see~\cite{vGMT17}), but in this paper, we focus only on failures of right uniform deductive interpolation, indeed only on cases where coherence fails.}  It is then easily proved (see~\cite{vGMT17}*{Proposition~3.5}) that $\V$ admits right uniform deductive interpolation if and only if

\begin{enumerate}
\item[\rm (i)]	 $\V$ admits deductive interpolation;
\item[\rm (ii)]	 for any finite sets $\x,\y$ and finite set of equations $\Si(\x,\y)$, there exists a set of equations $\Pi(\y)$ satisfying for any equation $\eps(\y)$, 
\[ 
\Si \mdl{\V} \eps \iff \Pi \mdl{\V} \eps.
\]
\end{enumerate}

\noindent
These notions may also be expressed in terms of congruences on the finitely generated free algebras of $\V$. Denote the congruence lattice of an algebra $\m{A}$ by $\Con \m{A}$ and the join-semilattice of compact (equivalently, finitely generated) congruences on $\m{A}$ by $\KCon \m{A}$, and write  $\cg{\m{A}}(S)$ to denote the congruence on $\m{A}$ generated by $S \subseteq A^2$. Recall also (see~\cite{MMT14}*{Lemma~2}) that for any sets of equations $\Si(\x), \De(\x)$, 
\[
\Si \models_\V \De \iff \cg{\F(\x)} (\De) \subseteq \cg{\F(\x)} (\Si).
\]
Observe next that the natural inclusion map  $i \colon \F(\y) \to  \F(\x,\y)$ ``lifts'' to the adjoint pair of maps
\[
\begin{array}{rcl}
i^* \colon \Con  \F(\y) \to \Con \F(\x,\y); & &
\Theta  \mapsto \cg{\F(\x,\y)}(i[\Theta])\\[.05in]
i^{-1} \colon \Con \F(\x,\y) \to  \Con \F(\y); & &
\Psi \mapsto i^{-1}[\Psi] = \Psi \cap \textup{F}(\y)^2.
\end{array}
\]
It is then straightforward to show that  $\V$  admits deductive interpolation if and only if for any finite sets $\x,\y,\z$, the following diagram commutes:
\begin{equation*}
\begin{tikzpicture}[baseline=(current  bounding  box.center)]
 \matrix (m) [matrix of math nodes, row sep=1.5em, column sep=1.5em]{
\Con \F(\x,\y) 		& 		& \Con  \F(\y) 		\\
 		&		& 		\\
\Con \F(\x,\y,\z) 	&     		& \Con \F(\y,\z) 	\\
};
\path
(m-1-1) edge[->] node[above] {$i^{-1}$} (m-1-3)
(m-1-1) edge[->] node[left] {$j^*$} (m-3-1)
(m-3-1) edge[->] node[below] {$k^{-1}$} (m-3-3)
(m-1-3) edge[->] node[right] {$l^*$} (m-3-3);
\end{tikzpicture}
\end{equation*}
where $i$, $j$, $k$, and $l$ are the inclusion maps between the corresponding finitely generated free algebras. 

Given any natural inclusion map  $i \colon \F(\y) \to  \F(\x,\y)$, the map $i^*$ will always restrict to a map $i^*|_{\KCon \F(\y)} \colon \KCon \F(\y) \to \KCon \F(\x,\y)$.  On the other hand, $i^{-1}$ restricts to $i^{-1}|_{\KCon \F(\x,\y)} \colon \KCon \F(\x,\y) \to \KCon \F(\y)$, yielding the right adjoint of $i^*|_{\KCon \F(\y)}$, if and only if $i$ preserves compact congruences; that is, if and only if for any compact congruence $\Psi$ on $\F(\x,\y)$, also $\Psi \cap \textup{F}(\y)^2$ is compact. It is not hard to see that this is exactly the case when condition (ii) for right uniform deductive interpolation is satisfied. Moreover, it was shown in~\cite{KM18} that this property is equivalent to the property of coherence, studied from a more general model-theoretic perspective by Wheeler in~\cites{Whe76,Whe78}. Recall that an algebra $\m{A} \in \V$ is  {\em finitely presented} (in $\V$) if it isomorphic to $\F(\x) / \The$ for some finite set $\x$ and $\The\in \KCon \F(\x)$.

\begin{theorem}[{\cite{KM18}*{Theorem~2.3}}]\label{p:characterization}
The following are equivalent:
\begin{enumerate}
\item[\rm (1)] 	For any finite sets $\x,\y$ and any finite set of equations $\Si(\x,\y)$, there exists a finite set of equations $\Pi(\y)$ such that for any equation $\eps(\y)$, 
  \[ 
\Si \mdl{\V} \eps \iff \Pi \mdl{\V} \eps.
\]
\item[\rm (2)] 	For any finite sets $\x,\y$ and any compact congruence $\The$ on $\F(\x,\y)$, the congruence $\The \cap F(\y)^2$  on $\F(\y)$ is compact.

\item[\rm (3)] 	$\V$ is {\em coherent}; that is, all finitely generated subalgebras of finitely presented members of $\V$ are finitely presented.\footnote{Note that, by our earlier assumption, coherence is defined here only for varieties in a signature $\lang$ that contains at least one constant symbol; this restriction is not essential, but allows for a neater presentation.} 

\end{enumerate}
\end{theorem}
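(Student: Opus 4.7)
The plan is to prove the cycle $(1) \Leftrightarrow (2) \Leftrightarrow (3)$, using the dictionary $\Si \mdl{\V} \De \iff \cg{\F(\x)}(\De) \subseteq \cg{\F(\x)}(\Si)$ recalled just before the theorem.

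For $(1) \Leftrightarrow (2)$, I would translate directly. Given finite $\Si(\x,\y)$, set $\The = \cg{\F(\x,\y)}(\Si)$, which is compact, and observe that the set of equations $\eps(\y)$ with $\Si \mdl{\V} \eps$ is bijective with $\The \cap F(\y)^2$. Condition (1) then asks for a finite $\Pi(\y)$ with $\cg{\F(\y)}(\Pi) = \The \cap F(\y)^2$, which is exactly compactness of this intersection. Conversely, every compact congruence on $\F(\x,\y)$ arises as $\cg{\F(\x,\y)}(\Si)$ for some finite $\Si(\x,\y)$, and the two formulations are then interchangeable.

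For $(2) \Rightarrow (3)$, I would start with a finitely presented $\m{A} \cong \F(\uvar)/\The$, where $\uvar$ is finite and $\The$ is compact, and a finitely generated subalgebra $\m{B} \leq \m{A}$ with generators $[t_1]_\The, \ldots, [t_n]_\The$ for some $\lang$-terms $t_i(\uvar)$. The key trick is to replace the $t_i$ by fresh variables: introducing $\y = \{y_1,\ldots,y_n\}$ and forming the compact congruence $\The' = \cg{\F(\uvar,\y)}(\The \cup \{(y_i, t_i) : 1 \leq i \leq n\})$ on $\F(\uvar,\y)$, one checks that $\m{B}$ is isomorphic to the subalgebra of $\F(\uvar,\y)/\The'$ generated by the images of $\y$, and hence to $\F(\y)/(\The' \cap F(\y)^2)$. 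By (2), this congruence is compact, so $\m{B}$ is finitely presented. For $(3) \Rightarrow (2)$, I would take finite $\x,\y$ and compact $\The$ on $\F(\x,\y)$, form the finitely presented $\m{A} = \F(\x,\y)/\The$, and consider the subalgebra $\m{B}$ of $\m{A}$ generated by the images of $\y$, which is finitely generated and so, by (3), finitely presented. Since $\m{B} \cong \F(\y)/(\The \cap F(\y)^2)$, it remains to conclude that this kernel is compact.

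The main obstacle I anticipate lies in this last step: the isomorphism gives a specific presentation of $\m{B}$ with generators $\y$, but finite presentability only guarantees the existence of \emph{some} finite presentation. To bridge this gap, I would invoke the standard universal-algebra fact that, for any finitely presented algebra $\m{B}$ and any surjection $\pi \colon \F(\z) \twoheadrightarrow \m{B}$ with $\z$ finite, the kernel of $\pi$ is compact. This is proved by comparing with a given finite presentation $\m{B} \cong \F(\w)/\Xi$, mapping generators back and forth and using compactness of $\Xi$ to bound the number of relations needed in $\ker(\pi)$. Applied to $\pi \colon \F(\y) \twoheadrightarrow \m{B}$, this delivers compactness of $\The \cap F(\y)^2$. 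A minor secondary concern is the edge case $\y = \emptyset$, where the assumption that $\lang$ contains a constant ensures $\F(\emptyset)$ is defined and the argument goes through unchanged.
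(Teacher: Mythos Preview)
The paper does not prove this theorem: it is quoted verbatim from~\cite{KM18} (as Theorem~2.3 there) and stated without proof, so there is no ``paper's own proof'' to compare against. Your proposal is correct and is essentially the standard argument; the cycle $(1)\Leftrightarrow(2)$ via the congruence--consequence dictionary, the fresh-variables trick for $(2)\Rightarrow(3)$, and the invocation of the well-known fact that finite presentability is independent of the chosen finite generating set for $(3)\Rightarrow(2)$ are exactly how this result is established in~\cite{KM18} and in the universal-algebra literature more broadly.
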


It follows that $\V$ admits right uniform deductive interpolation if and only if $\V$ is coherent and admits deductive interpolation. As mentioned already  in the introduction, it was proved by Wheeler in~\cite{Whe76} that the coherence of $\V$ is implied by (and, in conjunction with amalgamation and another property, implies) the existence of a model completion for the first-order theory of $\V$. Hence establishing the failure of coherence for a variety yields also the failure of uniform deductive interpolation and lack of a model completion. Examples of coherent varieties include abelian groups and any locally finite variety (since in these varieties, finitely generated algebras are finitely presented), lattice-ordered abelian groups and MV-algebras (see~\cite{vGMT17}*{Example~3.7}), and Heyting algebras (the critical part of Pitts' theorem~\cite{Pit92}). Note, however, that the variety of groups is not coherent, since every finitely generated recursively presented group embeds into a finitely presented group~\cite{Hig61}.


\section{A General Criterion} \label{s:criterion}

In this section, we show that in any coherent variety $\V$ of algebras with a term-definable join-semilattice reduct, each term $t(x,\bar{u})$ that  is increasing and monotone, and satisfies a fixpoint embedding condition (with respect to~$x$), has a fixpoint obtained by iterating $t$ finitely many times in the first argument. This result provides a general criterion for establishing the failure of coherence and therefore also the failure of right uniform deductive interpolation and the lack of a model completion. It generalizes a similar result obtained in~\cite{KM18} by allowing extra variables $\bar{u}$ in the term $t(x,\bar{u})$; in Section~\ref{s:modal} we make good use of this flexibility in dealing with weakly transitive modal logics.

Given any term $t(x,\bar{u})$, define inductively
\[
t^0(x,\bar{u}) := x \quad \text{and} \quad t^{k+1}(x,\bar{u}) := t(t^{k}(x,\bar{u}),\bar{u}) \ \text{ for } k \in \N.
\]
For any algebra $\m{A}$ and term $s(x_1,\ldots,x_n)$, the term function $s^\m{A} \colon A^n \to A$ is defined inductively in the usual way; for convenience, however, we often omit the superscript $^\m{A}$ when referring to such functions. 

\begin{theorem}\label{t:general}
Let $\V$ be a coherent variety of $\lang$-algebras with a term-definable  join-semilattice reduct and a term $t(x,\bar{u})$ satisfying 
\[
\V \models x \le t(x,\bar{u}) \quad \text{ and } \quad \V \models x \le y\, \Rightarrow\,  t(x,\bar{u}) \le t(y,\bar{u}). 
\]  
Suppose also that $\V$ satisfies the following \emph{fixpoint embedding condition} with respect to $t(x,\bar{u})$:
\begin{enumerate}[leftmargin=*]
\item[\textup{(FE)}]
For any finitely generated $\m{A}\in \V$ and $a,\bar{b} \in A$, there exists an algebra $\m{B}\in\V$ such that $\m{A}$ is a subalgebra of $\m{B}$ and the join $\bigvee_{k \in \N} t^k(a,\bar{b})$ exists in $\m{B}$ and satisfies  
\[
 \bigvee_{k \in \N} t^k(a,\bar{b}) = t(\bigvee_{k \in \N} t^k(a,\bar{b}),\bar{b}).
\]
\end{enumerate}
Then $\V \models t^{n}(x,\bar{u}) \eq t^{n+1}(x,\bar{u})$ for some $n \in \N$.
\end{theorem}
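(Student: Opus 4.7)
I would argue by contraposition: assume $t^n(x,\bar{u}) \not\eq t^{n+1}(x,\bar{u})$ in $\V$ for every $n \in \N$, and derive a failure of coherence via Theorem~\ref{p:characterization}.

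The central construction is to introduce a fresh variable $y$ and form the compact congruence
\[
\Theta \;:=\; \cg{\F(x,\bar{u},y)}\bigl(\{x \vee y \eq y,\ t(y,\bar{u}) \eq y\}\bigr)
\]
on $\F(x,\bar{u},y)$, which expresses that $y$ is a fixpoint of $t(-,\bar{u})$ lying above $x$. Monotonicity and a straightforward induction on $k$ give $(y \vee t^k(x,\bar{u}),\, y) \in \Theta$ for every $k \in \N$, so in the quotient $\F(x,\bar{u},y)/\Theta$ the chain $\{[t^k(x,\bar{u})]\}_{k \in \N}$ is dominated by $[y]$. To ensure this chain remains strictly ascending after quotienting, I would use (FE) to show that the canonical map $\F(x,\bar{u}) \to \F(x,\bar{u},y)/\Theta$ is injective: any pair $(s,r) \in \Theta \cap F(x,\bar{u})^2$ must vanish when $y$ is interpreted as the fixpoint $c = \bigvee_{k} t^k(x,\bar{u})$ provided by (FE) in an extension $\m{B} \supseteq \F(x,\bar{u})$, since the universal property of the quotient yields a homomorphism $\F(x,\bar{u},y)/\Theta \to \m{B}$ acting identically on $\F(x,\bar{u})$.

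The crux of the argument is then to exploit coherence. By Theorem~\ref{p:characterization}(2), $\Theta \cap F(\bar{u},y)^2$ is a compact congruence on $\F(\bar{u},y)$ generated by a finite set of equations $\Pi(\bar{u},y)$ (in fact, a direct calculation gives $\Pi = \{t(y,\bar{u}) \eq y\}$). The plan is to leverage the finiteness of $\Pi$, together with the strictly ascending chain $\{[t^k(x,\bar{u})]\}$ sitting below $[y]$, to pick an index $N$ and substitute $y \mapsto t^N(x,\bar{u})$ in a suitable derivation so as to produce an identity $t^N(x,\bar{u}) \eq t^{N+1}(x,\bar{u})$ inside $\F(x,\bar{u})$, contradicting the standing hypothesis. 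Iteration of coherence on nested free algebras, or passage to an auxiliary compact congruence encoding a ``trial'' stabilization stage, may be needed to actually read $N$ off from the finite data.

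The main obstacle I anticipate is precisely this transfer step: converting the finite generation of $\Theta \cap F(\bar{u},y)^2$ into an effective bound on the ascending chain $\{t^k(x,\bar{u})\}$ in $\F(x,\bar{u})$. In the parameter-free setting of~\cite{KM18} the analogous step is smoother, since the generic fixpoint $y$ can be swapped for an iterate of $x$ without tracking parametric data. In the present generalisation the variables $\bar{u}$ must be threaded consistently through both $\Theta$ and the substitution argument, and this added flexibility --- the genuine novelty over~\cite{KM18} --- is exactly what will be needed in Section~\ref{s:modal} for treating weakly transitive modal logics such as $\lgc{K4}$ and $\lgc{S4}$.
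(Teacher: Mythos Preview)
Your construction has a genuine gap at exactly the point you flag as ``the main obstacle,'' and the suggestions you offer (iterated coherence, auxiliary congruences) do not resolve it. The issue is that your application of coherence is vacuous. You eliminate $x$ from $\Theta$ and observe that $\Theta \cap F(\bar{u},y)^2$ is generated by $\{t(y,\bar{u}) \eq y\}$; but this equation was already among the generators of $\Theta$, so coherence has returned no new information and in particular no bound. The restriction to the other face, $\Theta \cap F(x,\bar{u})^2$, is --- as you yourself show via (FE) --- the identity congruence, so that direction is vacuous too. With only the two extra variables $x$ (starting point) and $y$ (fixpoint above it) there is simply no infinite family of consequences in the $y$-free or $x$-free fragment from which compactness could extract an $n$.

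The paper's proof repairs this by introducing a \emph{third} extra variable $z$ serving as a generic upper bound. Working with $\Si = \{y \le x,\; x \le z,\; x \eq t(x,\bar{u})\}$ (here $x$ is the fixpoint), one shows via (FE) that the consequences of $\Si$ in the variables $y,z,\bar{u}$ are axiomatized exactly by the infinite set $\Pi = \{t^k(y,\bar{u}) \le z \mid k\in\N\}$. Coherence forces $\Pi$ to be equivalent to a finite set $\Delta(y,z,\bar{u})$, and now compactness of $\mdl{\V}$ yields a single $n$ with $\{t^n(y,\bar{u}) \le z\} \mdl{\V} \Delta$. Since $\Delta \mdl{\V} t^{n+1}(y,\bar{u}) \le z$, substituting $z := t^n(y,\bar{u})$ gives $\V \models t^{n+1}(y,\bar{u}) \le t^n(y,\bar{u})$. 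The upper-bound variable $z$ is precisely the missing ingredient that turns coherence into an effective bound; your substitution $y \mapsto t^N(x,\bar{u})$ has no analogue because nothing in your setup ties $N$ to the finite data.
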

\begin{proof}
Let  $\V$ and $t(x,\bar{u})$ be as in the statement of the theorem. Note first that the fact that $t$ is increasing and monotone easily implies that for any $n\in\N$,
\[
\V \models t^{n}(x,\bar{u}) \leq t^{n+1}(x,\bar{u}).
\]
To establish the converse inequality for some $n\in\N$, we define sets of equations
\[
\Si = \{y \le x, x \le z, x \eq t(x,\bar{u})\}
\quad
\mbox{and}
\quad
\Pi = \{ t^k (y,\bar{u}) \le z \mid k \in \N\}
\]
and prove that for any equation $\eps(y,z,\bar{u})$,
\[
\Si \mdl{\V} \eps(y,z,\bar{u}) \iff \Pi \mdl{\V} \eps(y,z,\bar{u}). \leqno{(\star)}
\]
For the right-to-left direction, it suffices to observe that $\Si \mdl{\V} t^k (y,\bar{u}) \le z$ for each $k \in \N$. For the left-to-right direction, suppose contrapositively that $\Pi \not \mdl{\V}\eps(y,z,\bar{u})$. Since only finitely many variables occur in $\Pi$, there exist a finitely generated $\m{A} \in \V$ and a homomorphism $e \colon \Tm(y,z,\bar{u}) \to \m{A}$ such that $\Pi \subseteq \ker(e)$, but $\eps
\not \in \ker(e)$. Let $a=e(y)$, $\bar{b} = e(\bar{u})$.  By assumption, $\m{A}$ is a subalgebra of some $\m{B}\in \V$ such that $\bigvee_{k \in \N} t^k(a,\bar{b})$ exists in $\m{B}$ and  
\[
 \bigvee_{k \in \N} t^k(a,\bar{b}) = t(\bigvee_{k \in \N}  t^k(a,\bar{b}),\bar{b}).
\]
Since $x$ does not appear in $\Pi \cup\{\eps\}$, we may extend $e$ to a homomorphism $e \colon \Tm(x,y,z,\bar{u}) \to \m{B}$ by defining 
\[
e(x) = \bigvee_{k \in \N} t^k(a,\bar{b}).
\]
But $t^k (a,\bar{b}) \le e(z)$ for each $k \in \N$, so clearly $e(y) \le e(x) \le e(z)$. Moreover, by the fixpoint embedding condition,
\[
e(x) = \bigvee_{k \in \N} t^k(a,\bar{b}) = t(\bigvee_{k \in \N}
t^k(a,\bar{b}),\bar{b}) = e(t(x,\bar{u})). 
\]
Hence $\Si \subseteq \ker(e)$ and we obtain $\Si \not \mdl{\V} \eps(y,z,\bar{u})$, completing the proof of $(\star)$.

Since $\V$ is coherent, there exists a finite set of equations $\De(y,z,\bar{u})$ such that for any equation $\eps(y,z,\bar{u})$, 
\[
\Si  \mdl{\V} \eps(y,z,\bar{u}) \iff \De \mdl{\V} \eps(y,z,\bar{u}).
\]
So $\Si \mdl{\V} \De$, and, by $(\star)$, also $\Pi \mdl{\V} \De$. Using the compactness of $\mdl{\V}$ (see~\cite{MMT14}) and the fact that $\De$ is finite, $\Pi' \mdl{\V} \De$ for some finite $\Pi' \subseteq \Pi$.  But also $\{t^{k+1} (y,\bar{u}) \le z\} \mdl{\V} t^{k} (y,\bar{u}) \le z$ for each $k\in\N$, and hence for some $n \in \N$,
\[
\{t^{n} (y,\bar{u}) \le z\} \mdl{\V} \De.
\] 
Since $\Si  \mdl{\V}  t^{n+1} (y,\bar{u}) \le z$, also $\De\mdl{\V}  t^{n+1} (y,\bar{u}) \le z$. Hence, combining these consequences, $\{t^{n} (y,\bar{u}) \le  z\}\mdl{\V}  t^{n+1} (y,\bar{u}) \le z$. Finally, substituting $z$ with $t^{n} (y,\bar{u})$ and $y$ with $x$, we obtain $\V \models t^{n+1} (x,\bar{u}) \le t^{n} (x,\bar{u})$.  
\end{proof}

A  less general  version of this theorem was used in~\cite{KM18} to establish the failure of coherence for broad families of varieties of Boolean algebras with operators  and residuated lattices, as well as the varieties of double-Heyting algebras (algebraic semantics for bi-intuitionistic logic) and lattices. 


\section{Modal Logics}\label{s:modal}

In this section, we apply the general criterion from Section~\ref{s:criterion} to a wide range of normal modal logics. Since the central notion of coherence is algebraic in nature and our Theorem~\ref{t:general} is formulated algebraically, let us for convenience call a normal modal logic $\lgc{L}$ {\em coherent} if the variety of modal algebras $\V_\lgc{L}$ providing an equivalent algebraic semantics for $\lgc{L}$ is coherent. Our definition of consequence in a variety $\V$, stated in Section~\ref{s:interpolation}, corresponds to \emph{global} consequence in modal logic, so our use of modal logic terminology here should always be taken in its global meaning. Moreover, we will only consider normal modal logics, so in this section, \emph{logic} is synonymous with \emph{normal modal logic}.

Recall  that a logic $\lgc{L}$ is {\em strongly Kripke complete} if for every set of formulas $\Gamma\cup\{\varphi\}$, whenever $\Gamma\not\vdash_{\lgc{L}}\varphi$, there exists a Kripke frame $\mathfrak{F}$ and a valuation $v$, such that $\mathfrak{F}\models_v \Gamma$ and $\mathfrak{F}\not\models_v\varphi$. Wolter showed in~\cite{Wol93} (see also~\cite{ZWC01}) that $\lgc{L}$ is strongly Kripke complete if and only if each at most countably generated $\m{A}\in\V_{\lgc{L}}$ embeds into a complex algebra $\mathfrak{G}^+\in\V_{\lgc{L}}$ of some Kripke frame $\mathfrak{G}$. A variety with this property is  said to be \emph{$\omega$-complex} (see~\cite{ZWC01}).

Let us also recall that a logic $\lgc{L}$ is \emph{$n$-transitive} for $n\in\N$ if $\vdash_\lgc{L} \boxdot^n x\rightarrow\boxdot^{n+1} x$, or, equivalently, if the reflexive closure $R$ of the accessibility relation in Kripke frames for $\lgc{L}$ satisfies $R^{n+1}\subseteq R^n$. A logic $\lgc{L}$ is called \emph{weakly transitive} if it is $n$-transitive for some $n\in\N$. Equivalently, $\lgc{L}$ is weakly transitive if $\V_{\lgc{L}}$ has \emph{equationally definable principal congruences (EDPC)} (see, e.g.,~\cite{KK06}). 


\subsection{Non-Weakly Transitive Logics}\label{s:nonweaklytransitivemodal}

In~\cite{KM18} it was shown (Theorem~5.2) that any canonical modal logic that is coherent must be weakly transitive. Hence coherence fails for all canonical modal logics that are not weakly transitive.\footnote{In fact, a more general result was established in~\cite{KM18}: if a variety $\V$ of Boolean Algebras with Operators (BAOs) is both canonical and coherent, then several of its term reducts, including $\V$ itself and various reducts to a single operator, have EDPC, so coherence fails for all canonical varieties of BAOs without EDPC.} The class of canonical non-weakly transitive modal logics is already rather large, including standard logics such as  $\lgc{K}$, $\lgc{KT}$, $\lgc{KD}$, $\lgc{KB}$, $\lgc{KTB}$, and many others. However, the next result shows that the assumption of canonicity used in~\cite{KM18} can be replaced by the weaker assumption that the modal logic is strongly Kripke complete.

\begin{theorem}\label{t:old-result}
Any coherent strongly Kripke complete modal logic is weakly transitive.
\end{theorem}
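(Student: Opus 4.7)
The plan is to invoke Theorem~\ref{t:general} on $\V_\lgc{L}$ with the single-variable term $t(x) := x \lor \Diamond x$ (so $\bar{u}$ is empty). Modal algebras trivially have a term-definable join-semilattice reduct, and $t$ is visibly increasing and monotone because $\Diamond$ is a normal modal operator, so the term-level hypotheses of Theorem~\ref{t:general} are immediate. Once (FE) is checked, the theorem yields $\V_\lgc{L} \models t^{n}(x) \eq t^{n+1}(x)$ for some $n \in \N$. Since $\neg t(x) = \neg x \land \Box \neg x = \boxdot(\neg x)$, a straightforward induction gives $\neg t^{n}(x) \eq \boxdot^{n}(\neg x)$, and substituting $\neg x$ for $x$ converts this into $\boxdot^{n} x \eq \boxdot^{n+1} x$, i.e.\ $n$-transitivity of $\lgc{L}$.

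The work is therefore concentrated in verifying the fixpoint embedding condition. Given a finitely generated $\m{A}\in \V_\lgc{L}$ and $a \in A$, Wolter's characterization~\cite{Wol93} says that strong Kripke completeness is equivalent to $\omega$-complexity of $\V_\lgc{L}$, so $\m{A}$ embeds into the complex algebra $\m{B} := \mathfrak{G}^+ \in \V_\lgc{L}$ of some Kripke frame $\mathfrak{G} = (W,R)$. Viewing $a$ as a subset of $W$, the complete Boolean algebra $\mathfrak{G}^+$ contains
\[
c \;:=\; \bigvee_{k \in \N} t^{k}(a) \;=\; \bigcup_{k \in \N} t^{k}(a),
\]
namely the set of worlds reaching $a$ in finitely many $R$-steps (including zero). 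The equation $t(c) = c$ then reduces to $\Diamond c \subseteq c$: if $w R v$ with $v \in t^{k}(a)$, then $w \in t^{k+1}(a) \subseteq c$. Thus (FE) holds and Theorem~\ref{t:general} applies, delivering the conclusion above.

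The main conceptual step is realising that weak transitivity can be phrased dually as stabilisation of the increasing operator $x \lor \Diamond x$; once that shift is made, $\omega$-complexity automatically supplies the ambient complete lattice in which the iterated joins exist and are genuine fixpoints, and the rest is routine. The extra $\bar{u}$-flexibility of Theorem~\ref{t:general} over the corresponding result in~\cite{KM18} is not needed here, and will be exploited only in the subsequent subsections dealing with weakly transitive modal logics.
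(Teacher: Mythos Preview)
Your proof is correct and follows essentially the same route as the paper: the same term $t(x)=x\lor\Diamond x$, the same appeal to $\omega$-complexity for the embedding into a complex algebra, and the same invocation of Theorem~\ref{t:general}. The only differences are cosmetic---you verify the fixpoint $t(c)=c$ by a direct set-theoretic computation (where the paper invokes complete additivity of operators) and you spell out the Boolean dualisation from $t^n\eq t^{n+1}$ to $\boxdot^n\eq\boxdot^{n+1}$ that the paper leaves implicit.
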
 

\begin{proof}
Let $\lgc{L}$ be a coherent  strongly Kripke complete modal logic. Consider the term $t(x) = \Diamond x\vee x$. Clearly, $t$ is monotone and increasing; moreover, it defines an operator, so $t$ is completely additive in complex algebras. Let $\m{A}\in\V_{\lgc{L}}$ be finitely generated, and let $a\in A$.  Then $\m{A}$ is at most countable, so by the $\omega$-complexity of $\V_{\lgc{L}}$, it embeds into a complex algebra $\mathfrak{G}^+\in\V_{\lgc{L}}$ of some Kripke frame $\mathfrak{G}$ for $\lgc{L}$. By complete additivity, we have that in $\mathfrak{G}^+$,
$$
\bigvee_{k\in\N} t^k(a) = t\bigl(\bigvee_{k\in\N}t^k(a)\bigr).
$$
Hence $\V_\lgc{L}$ satisfies the fixpoint embedding condition (FE) with respect to $t$. But then by Theorem~\ref{t:general}, we have $\V_\lgc{L}\models t^{n+1}(x) \eq t^n(x)$ for some $n \in\N$, so $\lgc{L}$ is weakly transitive. 
\end{proof}

All canonical logics are strongly Kripke complete, but the converse does not hold; a counterexample can be obtained by applying Thomason simulation to the tense logic of the real line (see, e.g.,~\cite{ZWC01} for details). Hence Theorem~\ref{t:old-result} is slightly stronger than the results stated in~\cite{KM18}. Let us remark also that although the requirements of Theorem~\ref{t:general} are satisfied whenever each countable $\m{A}\in \V_\lgc{L}$ embeds into a direct product of finite algebras from $\V_\lgc{L}$, this property is too strong to produce interesting results. An embedding $\m{A}\hookrightarrow \prod_{i\in I}\m{B}_i$ can always be taken to be subdirect with subdirectly irreducible factors, which implies that each countable subdirectly irreducible algebra in $\V_\lgc{L}$ is finite. For modal algebras (but not in general), this further implies that $\V_\lgc{L}$ is locally finite and hence coherent.


\subsection{Weakly Transitive Logics}\label{s:weaklytransitivemodal}

Clearly, Theorem~\ref{t:old-result} cannot be used to show failures of coherence for weakly transitive logics. Also, its proof relies on the fact that $t$ defines an operator, so weakening the assumption of canonicity narrows the scope of applications of the method. Indeed, to extend our approach to weakly transitive logics, we require a term that does not define an operator. For example, to prove that the canonical logic $\lgc{K4}$ is not coherent, the term $\Diamond \Box x$ (which does not define an operator) can be used with Theorem~\ref{t:general}. This approach also works for some other weakly transitive logics, but not for the archetypal transitive logic $\lgc{S4}$. To establish the failure of coherence for $\lgc{S4}$ using Theorem~\ref{t:general}, a unary term $t$ will probably not suffice, and a \emph{positive} unary term will certainly not suffice, since the one-generated free positive interior algebra is finite (see~\cite{Mor18}).   

We therefore make use here of the ternary term  
\[
t(x,y,z) = \Box(y\lor\Box(z\lor x))\lor x.
\]
This term does not define an operator, and for any variety $\V$ of modal algebras,
\[
\V\models x \le t(x,y,z) \quad \text{ and } \quad \V\models x \le x'\, \Rightarrow\,  t(x,y,z) \le t(x',y,z).
\]
We will now show that for any modal logic $\lgc{L}$ whose Kripke frames include finite chains of arbitrary length, $\V_\lgc{L}\not\models t^{n}(x,y,z) \eq t^{n+1}(x,y,z)$ for all $n\in\N$. By a \emph{finite chain} we mean here any frame $\mathfrak{C}_n = (C,R)$ such that $|C| = n$ for some $n\in\N$, and the reflexive closure of $R$ is a total order on $C$. Note that, according to this definition, a finite chain is not uniquely determined by the number of its elements; indeed,  there are precisely $2^n$ finite chains with $n$ elements, one for each choice of (a subset of) reflexive points. We say then that a logic $\lgc{L}$ \emph{admits finite chains}, if for each $n\in\N$ there exists at least one $n$-element chain that is a frame for $\lgc{L}$.

\begin{lemma}\label{l:fin-chain}
Let $\lgc{L}$ be a modal logic admitting finite chains, and let $t(x,y,z)$ be as defined above. Then  $\V_\lgc{L}\not\models t^{n}(x,y,z) \eq t^{n+1}(x,y,z)$ for all $n\in\N$.
\end{lemma}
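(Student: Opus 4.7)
Plan: Fix $n \in \N$. The plan is to exhibit, in a sufficiently long chain frame for $\lgc{L}$, a valuation that distinguishes $t^n(x,y,z)$ from $t^{n+1}(x,y,z)$. Set $m := 2n+3$ and, using the hypothesis, pick a chain $\mathfrak{C} = (C, R)$ of size $m$ that is a frame for $\lgc{L}$; label its elements so that $c_0 < c_1 < \cdots < c_{m-1}$ under the reflexive closure of $R$. I would then define a valuation $v$ on $\mathfrak{C}$ by setting $v(x) = \{c_{m-1}\}$, $v(y) = \{c_i : 1 \le i \le m-2 \text{ and } i \text{ odd}\}$, and $v(z) = \{c_i : 0 \le i \le m-2 \text{ and } i \text{ even}\}$, so that $v(z)\cup v(x)$ is precisely the set of even-indexed points and $v(y)$ is the set of odd-indexed points strictly below $c_{m-1}$.

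The core of the argument is to compute, by induction on $k$, the set $A_k := v(t^k(x,y,z)) \subseteq C$ in the complex algebra $\mathfrak{C}^+$. The claim to prove is that each $A_k$ is upward-closed of the form $\{c_j : j \ge \tau_k\}$, with $\tau_0 = m-1$ and $\tau_{k+1} < \tau_k$ whenever $\tau_k > 0$. The inductive step unfolds the semantics of $\Box$: at $c_i$, $\Box(z \lor t^k)$ asks that every $R$-successor of $c_i$ lie in $v(z)\cup A_k$, i.e.\ be either even-indexed or already above the current threshold; applying the outer $\Box(y \lor \cdot)$ then picks up another layer of odd-indexed points. A short parity-tracking computation shows that each iteration of $t$ drops the threshold by $1$ or $2$, depending on the parity of $\tau_k$ and the reflexive pattern of the chain at the relevant point. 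In the two extreme cases one obtains clean recurrences: $\tau_k = m - 2k$ for $k \ge 1$ in the fully reflexive case (applicable to $\lgc{KT}$ and $\lgc{S4}$), and $\tau_k = m - 1 - 2k$ in the fully irreflexive case (admissible for $\lgc{K}$ and $\lgc{K4}$).

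With the recurrence in hand, the choice $m = 2n+3$ guarantees $\tau_n \ge 1$ and $\tau_{n+1} < \tau_n$, so $A_n \subsetneq A_{n+1}$. This witnesses $\mathfrak{C}^+ \not\models t^n(x,y,z) \eq t^{n+1}(x,y,z)$ under the assignment induced by $v$; and since $\mathfrak{C}^+ \in \V_\lgc{L}$ because $\mathfrak{C}$ is a frame for $\lgc{L}$, we conclude $\V_\lgc{L} \not\models t^n \eq t^{n+1}$, as required. The principal obstacle is the parity- and reflexivity-sensitive tracking of thresholds in the inductive step: when the chain has an arbitrary subset of reflexive points, the recurrence interpolates between the two extreme cases above, and a case analysis is needed to verify $\tau_{k+1} < \tau_k$ in every configuration. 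The slack afforded by the choice $m = 2n+3$, rather than a smaller value, is what makes the argument robust enough to succeed regardless of the chain's reflexive pattern.
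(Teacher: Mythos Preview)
Your proposal is essentially correct and would go through, but it takes a genuinely different route from the paper's proof.

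The paper works on a $(2n+1)$-element chain with the valuation $v(x)=v(z)=\{\text{odd points}\}$ and $v(y)=\{\text{even points}\}$. Instead of computing the sets $v(t^k)$ explicitly, the paper runs two separate arguments. First, it introduces auxiliary terms $s_0=\bot$, $s_{m+1}=\Box(y\lor\Box(z\lor s_m))$ with $\V_\lgc{L}\models s_m\le t^m$, and observes that $\lnot s_m$ at a point forces an $R$-path of length $2m$ alternating between $\lnot y$ and $\lnot z$; no such path exists once $m>n$, so $t^{n+1}$ holds everywhere. Second, it proves by induction on $\ell$ that $t^\ell$ fails at every even point $2k$ with $k\le n-\ell$; this uses explicit witnesses $2(n-\ell)-1$ and $2(n-\ell)$ for the failure of the two nested boxes, and these witnesses are available regardless of which points of the chain are reflexive. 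Hence the paper's argument is entirely insensitive to the reflexive pattern and works uniformly on the shortest possible chain.

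Your approach, by contrast, picks a longer chain ($m=2n+3$), a different valuation (with $v(x)$ concentrated at the top), and computes the full set $A_k=v(t^k)$ as an up-set with threshold $\tau_k$. This is more direct but forces the reflexivity-sensitive case analysis you flag as ``the principal obstacle''. One small inaccuracy: the drop is not always ``$1$ or $2$''. From an odd threshold $\tau_k\ge 3$ the drop can be $3$ (when both relevant points are irreflexive). However, a parity argument shows the total drop over $n$ steps is at most $2n$: each drop of $3$ (odd~$\to$~even) must be preceded by a drop of~$1$ (even~$\to$~odd) since $\tau_0$ is even, and these pair up with net contribution~$4$ over two steps. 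So $\tau_n\ge (m-1)-2n=2>0$, and your choice $m=2n+3$ does provide exactly the slack needed. In short: both approaches work; the paper's buys uniformity in the reflexive pattern and a tighter chain, yours buys a more hands-on computation at the cost of cases.
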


\begin{proof}
Fix $n\in\N$. By the construction of $t$, $\V_\lgc{L}\models t^{n}(x,y,z) \leq t^{n+1}(x,y,z)$, so we need to show that the converse inequality fails in $\V_\lgc{L}$. Following common practice, in what follows we write just $t^m$ for $t^m(x,y,z)$ ($m \in \mathbb{N}$).

Since $\lgc{L}$ admits finite chains, let $\mathfrak{C}_n = (C_n,R_n)$ be a $2n+1$-element chain that is a frame for $\lgc{L}$. Without loss of generality, $C_n = \{0,1,\dots,2n\}$, and $R_n$ extends the natural strict order on $C_n$ by making some points reflexive. Let $v$ be a valuation extending the map $v\colon \{x,y,z\}\to \mathcal{P}(\{0,1,\dots,2n\})$ given by  
\begin{eqnarray*}
v(x) = && v(z) = \{2i+1\mid 0 \le i\le n-1\},\\
v(y) = && \{2i\mid 0 \le i \le n\},
\end{eqnarray*}
as illustrated by the following diagram:
\[
\begin{tikzpicture}[dot/.style={inner sep=1.5pt,circle,draw,fill=white,thick}]
\node (v0) at (0,0) [dot,label=below:$0$,label=above:$y$] {};
\node (v1) at (1,0) [dot,label=below:$1$,label=above:{$x,z$}] {};
\node (v2) at (2,0) [dot,label=below:$2$,label=above:{$y$}] {};
\node (v3) at (4,0) [dot,label=below:$2k-1$,label=above:{$x,z$}] {};
\node (v4) at (5,0) [dot,label=below:$2k$,label=above:{$y$}] {};
\node (v5) at (6,0) [dot,label=below:$2k+1$,label=above:{$x,z$}] {};
\node (v6) at (9,0) [dot,label=below:$2n-1$,label=above:{$x,z$}] {};
\node (v7) at (10,0) [dot,label=below:$2n$,label=above:$y$] {};
\draw[thick] (v0) -- (v1) -- (v2);
\draw[dashed] (v2) -- (v3);
\draw[thick] (v3) -- (v4) -- (v5);
\draw[dashed] (v5) -- (v6);
\draw[thick] (v6) -- (v7);
\end{tikzpicture}
\]
Define $s_0 = \bot$ and $s_{m+1} = \Box(y\vee\Box(z\vee s_m))$. Then  $\V_\lgc{L}\models s_m \le t^m$ for all $m\in \N$. Just observe that, inductively, $\V_\lgc{L} \models s_0 \le x$, and since $t^{m+1} = \Box(y\vee\Box(z\vee t^m))\vee t^m$, we obtain $\V_\lgc{L} \models s_m \le t^m$ by the induction hypothesis, and then $\V_\lgc{L} \models s_{m+1} \le t^{m+1}$ as required. By the construction of $s_m$, it can only fail at a point $a$ (that is, $a \models_{\langle \mathfrak{C}_n,v\rangle} \lnot s_m$) if there is a path $a = a_0 \mathbin{R} a_1 \mathbin{R} \dots \mathbin{R} a_{2m-1} \mathbin{R} a_{2m}$ of (not necessarily distinct)  points $a_0,a_1,\dots,a_{2m}$ such that $\neg y$ holds at $a_i$ for all odd $i$, and $\neg z$ holds at $a_i$ for all even $i$. Inspecting $\mathfrak{C}_n$ we see that such a path does not exist for any $m > n$. So $s_m$ holds at every point of $\mathfrak{C}_n$, for any $m > n$. Since $\V_\lgc{L}\models s_{n+1} \le t^{n+1}$, also $t^{n+1}$ holds at every point of $\mathfrak{C}_n$.  

\medskip
\noindent
\emph{Claim.\/} For any $\ell\leq n$, we have that $2k\models_{\langle \mathfrak{C}_n,v\rangle} \neg t^\ell$ for all
$k\leq n-\ell$. 

\medskip
\noindent
\emph{Proof of Claim.\/}
Induction on $\ell$. For $\ell = 0$, it is immediate that for every $k\leq n$, we have $2k\models_{\langle  \mathfrak{C}_n,v\rangle}\neg x$. For $\ell+1\leq n$,  by the induction hypothesis, $t^\ell$ fails at $2k$ for all $k\leq n-\ell$, so we have the following:
\begin{eqnarray*}
2(n-\ell) &&\models_{\langle \mathfrak{C}_n,v\rangle} \neg t^\ell,\neg z\\
2(n-\ell)-1 &&\models_{\langle \mathfrak{C}_n,v\rangle} \neg y\\
2(n-\ell-1) &&\models_{\langle  \mathfrak{C}_n,v\rangle}\neg t^\ell,\neg z \\
   &&\vdots\\
2 &&\models_{\langle  \mathfrak{C}_n,v\rangle} \neg t^\ell,\neg z\\
1 &&\models_{\langle \mathfrak{C}_n,v\rangle} \neg y\\
0 &&\models_{\langle \mathfrak{C}_n,v\rangle} \neg t^\ell,\neg z.
\end{eqnarray*}
Since $t^{\ell+1} = \Box(y\vee\Box(z\vee t^\ell))\vee t^\ell$, we have that $t^{\ell+1}$ fails at $2(n-\ell-1)$ and at all even points  below it. This proves the claim. \qed
\medskip
\noindent

Finally, taking $\ell = n$ in the above claim, we obtain $0 \not\models_{\langle \mathfrak{C}_n,v\rangle} t^n$, and $0 \models_{\langle \mathfrak{C}_n,v\rangle}t^{n+1}$. Therefore,  $\mathfrak{C}_n^+\models t^{n+1}\not\leq t^n$ holds for the complex algebra $\mathfrak{C}_n^+$ of the frame $\mathfrak{C}_n$, and  $\V_\lgc{L}\not\models t^{n} \eq
t^{n+1}$, as required. 
\end{proof}

\noindent
Combining Theorem~\ref{t:general} with Lemma~\ref{l:fin-chain}, we obtain immediately the following sufficient condition for the failure of coherence in a modal logic.

\begin{theorem}\label{t:fin-chain}
Let $\lgc{L}$ be a modal logic admitting finite chains. If\/ $\V_\lgc{L}$ satisfies the  fixpoint embedding condition \textup{(FE)} with respect to the term $t(x,y,z)$ defined above, then $\V_\lgc{L}$ is not coherent.
\end{theorem}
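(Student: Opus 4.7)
The plan is to prove this by contradiction, combining Theorem~\ref{t:general} with Lemma~\ref{l:fin-chain}. Assume, toward a contradiction, that $\V_\lgc{L}$ is coherent. I would first verify that the ternary term $t(x,y,z) = \Box(y \lor \Box(z \lor x)) \lor x$, viewed as $t(x,\bar{u})$ with $\bar{u} = (y,z)$, satisfies all the hypotheses of Theorem~\ref{t:general} for the variety $\V_\lgc{L}$.

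There are four things to check, all routine: (i) $\V_\lgc{L}$ has a term-definable join-semilattice reduct, which is immediate since every modal algebra has a Boolean reduct and hence a join-semilattice given by $x \lor y$; (ii) $\V_\lgc{L} \models x \le t(x,y,z)$, which is immediate from the form of $t$, whose outermost connective is $\lor x$; (iii) $t$ is monotone in its first argument, as already noted in the discussion preceding Lemma~\ref{l:fin-chain}, since $\Box$ and $\lor$ are monotone and monotonicity is preserved under composition; and (iv) the fixpoint embedding condition \textup{(FE)} for $t$, which is precisely the standing assumption of the theorem.

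Once these hypotheses are in place, Theorem~\ref{t:general} applies and delivers some $n \in \N$ such that $\V_\lgc{L} \models t^n(x,y,z) \eq t^{n+1}(x,y,z)$. On the other hand, since $\lgc{L}$ admits finite chains by assumption, Lemma~\ref{l:fin-chain} asserts that $\V_\lgc{L} \not\models t^n(x,y,z) \eq t^{n+1}(x,y,z)$ for every $n \in \N$. This is the desired contradiction, so $\V_\lgc{L}$ is not coherent.

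There is really no obstacle of substance at this stage: the genuinely delicate work has already been done, both in Theorem~\ref{t:general} (extracting a finite-iteration fixpoint from coherence plus \textup{(FE)}) and in Lemma~\ref{l:fin-chain} (constructing, for each $n$, a valuation on a $(2n+1)$-point chain separating $t^n$ from $t^{n+1}$). The only thing to get right is the bookkeeping matching our parameters $y,z$ to the tuple $\bar{u}$ in the statement of Theorem~\ref{t:general}, which is straightforward.
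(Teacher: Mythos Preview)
Your proof is correct and matches the paper's approach exactly: the paper simply states that the result follows immediately by combining Theorem~\ref{t:general} with Lemma~\ref{l:fin-chain}, and your write-up spells out precisely this argument by contradiction. The only difference is that you make explicit the routine verifications (join-semilattice reduct, increasing, monotone, (FE)) that the paper leaves implicit.
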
  
 
The main obstacle to applying Theorem~\ref{t:fin-chain} is the satisfaction of (FE) with respect to the term $t(x,y,z)$. Below, we explain why this condition is satisfied for all modal logics $\lgc{L}$ such that canonical extensions of countable members of $\V_\lgc{L}$ themselves belong to $\V_\lgc{L}$. If a variety $\V$ of Boolean Algebras with Operators (BAOs) possesses this property, it is said to be \emph{countably canonical}. It is an open question whether countable canonicity implies canonicity. 

We will assume basic knowledge about canonicity in algebraic form; in particular, we assume familiarity with the notion of a \emph{canonical extension} $\m{A}^{\!\si}$ of a BAO $\m{A}$. We refer the reader to~\cite{GH01} for the general theory of canonical extensions, and to~\cite{Jon94} for the necessary background on canonical extensions of BAOs. To keep the presentation smooth, we recall some terminology and facts, mostly from~\cite{Jon94}. Let $\m{A}$ be a BAO, and $t$ a term in the signature of $\m{A}$. If $t$ is a fundamental operation of $\m{A}$, the interpretation of $t$ in $\m{A}^{\!\sigma}$ is defined to be the canonical extension of $t^{\m{A}}$ as a map: put succinctly, $(t^{\m{A}})^\sigma = t^{\m{A}^{\!\sigma}}$. Although this equality is a definition for the fundamental operations, it is in general not preserved under composition. Hence, an arbitrary term-operation $t^{\m{A}}$ can be extended to an operation on $\m{A}^{\!\sigma}$ in two ways: as $(t^{\m{A}})^\sigma$, or as $t^{\m{A}^{\!\sigma}}$.  A term $t$ is called \emph{stable} if these two ways always coincide, that is, if for every BAO $\m{A}$ of appropriate signature, $(t^{\m{A}})^\sigma = t^{\m{A}^{\!\sigma}}$.  Terms defined by composing operators and lattice operations, or dual operators and lattice operations, are stable.  In particular, the term $t(x,y,z) = \Box(y\lor\Box(z\lor x))\lor x$  is stable. 

We will also need two lemmas spelling out certain fixpoint properties of canonical extensions. The first of these is a reformulation in terms of
canonical extensions of Esakia's Lemma, first proved in~\cite{Esa74}. (For a thorough treatment of Esakia's Lemma, and its connections to canonical extensions we refer the reader to~\cite{Geh14}.)

\begin{lemma}\label{l:gen-fix}
Let $\m{L}$ be a bounded lattice and let $f\colon L\to L$ be an order-preserving map. If $X\subseteq L$ is upward directed and closed under $f$, and $f$ is increasing on $X$, then  $f^\si(\bigvee X) = \bigvee X$ in $\m{L}^\si$. 
\end{lemma}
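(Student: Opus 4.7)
The plan is to derive the lemma from the standard construction of the canonical extension $f^\si$ of an order-preserving map, together with the compactness of the dense embedding $\m{L}\hookrightarrow\m{L}^\si$. Since the statement is presented as a reformulation of Esakia's Lemma, I expect the two inequalities to fall out quickly once the relevant facts about $\m{L}^\si$ are in place. I would prove $\bigvee X \leq f^\si(\bigvee X)$ and $f^\si(\bigvee X) \leq \bigvee X$ separately.

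For the easy direction $\bigvee X \leq f^\si(\bigvee X)$, I would use that $f^\si$ extends $f$ on $L$ and is order-preserving on $\m{L}^\si$: for each $a\in X$ the hypothesis that $f$ is increasing on $X$ gives $a\leq f(a)=f^\si(a)\leq f^\si(\bigvee X)$, and taking the join over $X$ yields the desired inequality.

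For the harder direction $f^\si(\bigvee X)\leq \bigvee X$, I would unpack the definition of $f^\si$ on the open element $\bigvee X$. For an order-preserving map, the standard isotone construction gives
\[
f^\si(\textstyle\bigvee X) \;=\; \bigvee\bigl\{\,f^\si(p) \,:\, p \text{ closed},\ p\leq \textstyle\bigvee X\,\bigr\},
\]
with $f^\si(p)=\bigwedge\{f(b):b\in L,\, p\leq b\}$ for each closed $p$. Fixing such a $p$, compactness of the canonical embedding (a closed element below a join of elements of $L$ is already below a finite subjoin) produces a finite $F\subseteq X$ with $p\leq \bigvee F$; directedness of $X$ then furnishes an upper bound $a\in X$ of $F$, so $p\leq a$. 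Specialising the meet defining $f^\si(p)$ to $b=a$ gives $f^\si(p)\leq f(a)$, and closure of $X$ under $f$ places $f(a)$ inside $X$, whence $f^\si(p)\leq \bigvee X$. Taking the join over all such $p$ completes the inequality.

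The one delicate point is the compactness step: directedness, rather than mere closure under binary joins, is what allows one to collapse the finite subjoin $\bigvee F$ back to a single element $a\in X$, and it is this collapse together with $f(a)\in X$ that keeps the resulting bound inside $\bigvee X$. Everything else is bookkeeping inside the canonical extension, for which the references already cited supply the required isotone-map machinery.
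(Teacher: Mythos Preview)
Your argument is correct. The two inequalities are handled exactly as one would expect: the ``increasing'' hypothesis gives $a\le f(a)=f^\si(a)\le f^\si(\bigvee X)$ for each $a\in X$, and the converse follows from the $\sigma$-extension formula on the open element $\bigvee X$ together with compactness and directedness, just as you outline. The only point worth tightening is to make explicit that the compactness you invoke is the defining compactness of the canonical extension (a closed element below a join of open elements---here, elements of $L$---lies below a finite subjoin); once that is granted, the rest is routine.

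As for comparison with the paper: there is nothing to compare. The paper does not prove Lemma~\ref{l:gen-fix}; it merely records it as a reformulation of Esakia's Lemma and points to~\cite{Esa74} and~\cite{Geh14} for background. Your write-up therefore supplies a self-contained proof where the paper defers to the literature. This is perfectly in the spirit of the references cited: in~\cite{Geh14} and related work the key content of Esakia's Lemma in canonical-extension form is precisely that $f^\si$ preserves down-directed meets of closed elements (dually, up-directed joins of open elements), and your compactness-plus-directedness step is the standard way to extract that fact.
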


The second is a reformulation of Lemma~4.10, from~\cite{KM18}, where it was shown to hold, in a slightly different form, for a more general class of algebras. Here we state it for BAOs.

\begin{lemma}\label{l:spec-fix}
Let $\V$ be a variety of Boolean Algebras with Operators and let $t(x,\bar{u})$ be a stable term such that
\[
\V \models x \le t(x,\bar{u}) \quad \text{and} \quad \V \models x \le y\,  \Rightarrow\,  t(x,\bar{u}) \le t(y,\bar{u}).
\] 
Let $\m{A}\in\V$, $a,\bar{b}\in A$, and $X = \{(t^k)^{\m{A}}(a,\bar{b})\mid k\in \N\}$.  Then $\bigvee\! X$ exists in $\m{A}^{\!\si}$ and $t^{\m{A}^\si}(\bigvee\! X, \bar{b}) = \bigvee\! X$.  
\end{lemma}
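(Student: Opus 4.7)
The plan is to apply the generalized Esakia's Lemma (Lemma~\ref{l:gen-fix}) to the unary map $f\colon A \to A$ obtained from $t^{\m{A}}$ by fixing the parameter tuple to $\bar{b}$, that is, $f(c) := t^{\m{A}}(c,\bar{b})$. The three hypotheses of Lemma~\ref{l:gen-fix} are easy to verify for $f$ and $X$. Since $\V \models x \le t(x,\bar{u})$, we have $(t^k)^{\m{A}}(a,\bar{b}) \le (t^{k+1})^{\m{A}}(a,\bar{b})$ for every $k\in\N$, so $X$ is actually a chain and in particular upward directed. Closure of $X$ under $f$ is immediate from the inductive definition of the iterated terms. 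Monotonicity of $f$ on $A$ (hence order-preservation) follows from $\V \models x \le y \Rightarrow t(x,\bar{u}) \le t(y,\bar{u})$, and the fact that $f$ is increasing on $X$ again follows from $\V \models x \le t(x,\bar{u})$.

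Applying Lemma~\ref{l:gen-fix} I would conclude that $\bigvee X$ exists in $\m{A}^{\!\si}$ (existence is anyway automatic, since canonical extensions are complete lattices) and that $f^\si(\bigvee X) = \bigvee X$. It then remains to translate this fixpoint equation into a statement about $t$ evaluated in $\m{A}^{\!\si}$. Here I would invoke stability of $t$: because $\bar{b}\in A$, the canonical extension of the unary map $f$ coincides with the result of fixing the $\bar{u}$-coordinates of the canonical extension of $t^{\m{A}}$, i.e., $f^\si(v) = (t^{\m{A}})^\si(v,\bar{b})$ for every $v\in \m{A}^{\!\si}$ (partial evaluation at a tuple from $A$ commutes with taking $\si$-extensions of order-preserving maps on BAOs, since the involved coordinates are both open and closed elements of $\m{A}^{\!\si}$). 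Stability of $t$ then yields $(t^{\m{A}})^\si(v,\bar{b}) = t^{\m{A}^{\!\si}}(v,\bar{b})$, and combining these two identifications at $v=\bigvee X$ with the fixpoint equality from Esakia's Lemma gives exactly $t^{\m{A}^{\!\si}}(\bigvee X,\bar{b}) = \bigvee X$.

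The main obstacle I expect is precisely the last identification, namely separating the two possible extensions of a multivariable term to $\m{A}^{\!\si}$ and justifying that partial evaluation at a tuple from $\m{A}$ commutes with canonical extension. Once that bookkeeping is handled via the stability of $t$ (which holds for our $t(x,y,z) = \Box(y\lor\Box(z\lor x))\lor x$ since it is built from operators and lattice operations), the proof reduces to a clean application of Lemma~\ref{l:gen-fix} to the chain $X$ and the monotone increasing map $f$.
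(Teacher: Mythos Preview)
Your proposal is correct and follows essentially the same route as the paper: define $f(c)=t^{\m{A}}(c,\bar{b})$, verify the hypotheses of Lemma~\ref{l:gen-fix} for $f$ and the chain $X$, and then use stability to pass from $f^\si(\bigvee X)=\bigvee X$ to $t^{\m{A}^{\!\si}}(\bigvee X,\bar{b})=\bigvee X$. If anything, you are more explicit than the paper, which silently conflates $f^\si$ with $(t^{\m{A}})^\si(\,\cdot\,,\bar{b})$; your remark that partial evaluation at the clopen tuple $\bar{b}\in A$ commutes with taking $\si$-extensions is exactly the justification that step needs.
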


\begin{proof}
Let $f\colon A \to A$ be the map defined by $f(x) = t^{\m{A}}(x,\bar{b})$. Then $f$ is order-preserving, $X$ is upward directed, closed under $f$,
and $f$ is increasing on $X$. Hence, by Lemma~\ref{l:gen-fix}, we have $(t^{\m{A}})^\sigma(\bigvee X,\bar{b}) = \bigvee X$ in $\m{A}^{\!\sigma}$.
But $t$ is stable, so $t^{\m{A}^\si}(\bigvee\! X, \bar{b}) = \bigvee\! X$  in
$\m{A}^{\!\si}$, as required.   
\end{proof}

\begin{theorem}\label{t:main}
Let $\lgc{L}$ be a modal logic admitting finite chains, and such that  $\V_\lgc{L}$ is countably canonical. Then
\begin{enumerate}
\item[\rm (a)] $\V_\lgc{L}$ is not coherent;
\item[\rm (b)] $\V_\lgc{L}$ does not admit right uniform deductive interpolation;
\item[\rm (c)] the first-order theory of\/ $\V_\lgc{L}$ does not have a model completion.
\end{enumerate}  
\end{theorem}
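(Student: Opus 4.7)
The plan is to deduce all three items from the machinery already developed in the paper, using countable canonicity to verify the fixpoint embedding condition (FE) for the distinguished ternary term $t(x,y,z) = \Box(y\lor\Box(z\lor x))\lor x$, and then combining this with Theorem~\ref{t:fin-chain} and the general relationships between coherence, right uniform deductive interpolation, and the existence of model completions.

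For part (a), I would fix a finitely generated $\m{A}\in\V_\lgc{L}$ and arbitrary $a,\bar{b}\in A$. Since the signature is countable and $\m{A}$ is finitely generated, $\m{A}$ is at most countable, so by countable canonicity $\m{A}^\si\in\V_\lgc{L}$; moreover the canonical embedding realises $\m{A}$ as a subalgebra of $\m{A}^\si$. The term $t(x,y,z)$ is stable, being built by composing the dual operator $\Box$ with the lattice operation $\vee$ (as noted in the discussion preceding Lemma~\ref{l:gen-fix}), and it is increasing and monotone in its first argument in any variety of modal algebras. Hence Lemma~\ref{l:spec-fix} applies, yielding that $\bigvee_{k\in\N} t^k(a,\bar{b})$ exists in $\m{A}^\si$ and is a fixpoint of $t(-,\bar{b})$ there. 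Taking $\m{B} := \m{A}^\si$ verifies (FE), and since $\lgc{L}$ admits finite chains, Theorem~\ref{t:fin-chain} gives the failure of coherence.

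Parts (b) and (c) then follow immediately from results recalled after Theorem~\ref{p:characterization}. For (b), a variety admits right uniform deductive interpolation if and only if it is coherent and admits deductive interpolation, so the failure of coherence precludes right uniform deductive interpolation. For (c), Wheeler's theorem asserts that coherence of $\V$ is implied by the existence of a model completion for the first-order theory of $\V$; contrapositively, absence of coherence prevents any model completion from existing.

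The only delicate point is verifying (FE), and the real work has already been done upstream: Lemma~\ref{l:spec-fix} (which rests on the Esakia-type Lemma~\ref{l:gen-fix}) produces the fixpoint inside $\m{A}^\si$, and countable canonicity is precisely what is needed to keep $\m{A}^\si$ inside $\V_\lgc{L}$. No separate analysis of the iterates $t^k(a,\bar{b})$ is required once stability of $t$ has been observed, so the theorem is essentially a packaging of Theorem~\ref{t:fin-chain} with the canonical-extension method.
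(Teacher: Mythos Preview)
Your proposal is correct and follows essentially the same route as the paper: verify (FE) for the stable term $t(x,y,z)$ by embedding a finitely generated (hence countable) $\m{A}\in\V_\lgc{L}$ into its canonical extension $\m{A}^\si$, which lies in $\V_\lgc{L}$ by countable canonicity, and invoke Lemma~\ref{l:spec-fix}; then apply Theorem~\ref{t:fin-chain} for (a) and deduce (b) and (c) from Theorem~\ref{p:characterization} and Wheeler's result, respectively.
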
   

\begin{proof}
By Theorem~\ref{t:fin-chain}, to prove (a), it suffices to show that $\V_\lgc{L}$ satisfies the fixpoint embedding condition (FE)  with respect to $t(x,y,z)$. Let $\m{A}\in\V_\lgc{L}$ be finitely generated. Then $\m{A}$ is at most countable, so $\m{A}^{\!\si}\in\V_\lgc{L}$ because $\V_\lgc{L}$ is countably canonical. Since $t(x,y,z)$ is stable, Lemma~\ref{l:spec-fix} guarantees that $t(\bigvee_{k\in\N}t^k(a,b,c))  = \bigvee_{k\in\N}t^k(a,b,c)$ in $\m{A}^{\!\si}$ for all $a,b,c \in A$. So $\V_\lgc{L}$ satisfies (FE) as required. The statements  (b) and (c) then follow using Theorem~\ref{p:characterization} and the fact, proved in~\cite{Whe78}, that if the first-order  theory of a variety has a model completion, then the variety is coherent.
\end{proof}  

We can also formulate a positive version of this theorem for coherent logics.

\begin{corollary}
Let $\lgc{L}$ be any coherent modal logic for which $\V_\lgc{L}$ is countably canonical. Then for any stable term $t(x,\bar{u})$ where $x$ occurs only positively, there exists $n \in \N$ such that $\vdash_\lgc{L} t_+^{n}(x,\bar{u}) \leftrightarrow t_+^{n+1}(x,\bar{u})$, where $t_+(x,\bar{u}) := x \lor t(x,\bar{u})$.
 \end{corollary}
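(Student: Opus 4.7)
The plan is to apply Theorem~\ref{t:general} to $\V_\lgc{L}$ with the term $t_+(x,\bar{u}) = x \lor t(x,\bar{u})$, and then translate the resulting identity into a theorem of $\lgc{L}$ via the equivalent algebraic semantics. The first three hypotheses of Theorem~\ref{t:general} are essentially free: $\V_\lgc{L}$ is coherent by assumption; the join-semilattice reduct is supplied by $\lor$ on modal algebras; and by construction $\V_\lgc{L} \models x \le t_+(x,\bar{u})$, while monotonicity of $t_+$ in $x$ follows because $x$ occurring only positively in $t$ makes $t$ monotone in $x$, and $\lor$ is monotone in both arguments.

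The non-routine step is the fixpoint embedding condition (FE) for $t_+$. Given any finitely generated $\m{A} \in \V_\lgc{L}$, $\m{A}$ is at most countable, so countable canonicity yields $\m{A}^\si \in \V_\lgc{L}$, and this is the envelope $\m{B}$ required by (FE). To produce the fixpoint in $\m{A}^\si$, I would appeal to Lemma~\ref{l:spec-fix}: provided $t_+$ is stable, increasing, and monotone in $x$, the join $\bigvee_{k\in\N} t_+^k(a,\bar{b})$ exists in $\m{A}^\si$ and is fixed by $t_+(\cdot,\bar{b})$, which is exactly the conclusion demanded by (FE).

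The main obstacle is therefore verifying that $t_+$ is stable. I would argue as follows: the identity term $x$ is manifestly stable (being a projection, hence a fundamental operation in both $\m{A}$ and $\m{A}^\si$), and $t$ is stable by assumption. Since both terms are monotone in every variable (the identity trivially, $t$ because $x$ occurs only positively and $\bar{u}$ can be treated parameterwise), and since binary join of monotone maps commutes with the $\si$-extension --- a standard fact from the theory of canonical extensions of BAOs, as developed in~\cite{Jon94}, because $\lor$ itself is stable and distributes over the suprema used to define $(\cdot)^\si$ on monotone maps --- we obtain $(t_+^{\m{A}})^\si = (\mathrm{id}^{\m{A}})^\si \lor (t^{\m{A}})^\si = \mathrm{id}^{\m{A}^\si} \lor t^{\m{A}^\si} = t_+^{\m{A}^\si}$, which is stability of $t_+$.

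With (FE) in hand, Theorem~\ref{t:general} gives some $n \in \N$ such that $\V_\lgc{L} \models t_+^n(x,\bar{u}) \eq t_+^{n+1}(x,\bar{u})$. Translating this identity through the equivalent algebraic semantics produces $\vdash_\lgc{L} t_+^n(x,\bar{u}) \leftrightarrow t_+^{n+1}(x,\bar{u})$, as required.
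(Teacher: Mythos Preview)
Your proposal is correct and follows exactly the route the paper intends: the corollary is stated without proof precisely because it is the contrapositive reading of the argument for Theorem~\ref{t:main}, namely verify the hypotheses of Theorem~\ref{t:general} for $t_+$, and obtain (FE) from countable canonicity together with Lemma~\ref{l:spec-fix}. Your identification of the stability of $t_+$ as the only non-trivial point, and your use of the fact that $(f\lor g)^\sigma = f^\sigma\lor g^\sigma$ for order-preserving maps (which indeed holds in complete Boolean algebras by distributivity of $\lor$ over arbitrary meets, combined with directedness of $\{a\in A : a\ge p\}$), is the right way to close that gap.

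One small point of presentation: the phrase ``$t$ is monotone in every variable \dots\ $\bar{u}$ can be treated parameterwise'' conflates two different things. You do not need, and in general do not have, monotonicity of $t$ in $\bar{u}$; what you actually use is that for fixed $\bar{b}\in A$ the unary map $x\mapsto t^{\m{A}}(x,\bar{b})$ is order-preserving, so the join-commutation fact applies to the pair $\mathrm{id}$ and $x\mapsto t^{\m{A}}(x,\bar{b})$. Stability of $t$ then gives $(t^{\m{A}}(\cdot,\bar{b}))^\sigma = t^{\m{A}^\sigma}(\cdot,\bar{b})$ because $\bar{b}$ is clopen in $\m{A}^\sigma$ --- exactly the manoeuvre already implicit in the proof of Lemma~\ref{l:spec-fix}. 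With that clarification the argument is clean.
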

 
 Theorem~\ref{t:main} implies the failure of coherence for a broad family of modal logics, including $\lgc{K}$, $\lgc{KT}$, $\lgc{K4}$, 
$\lgc{K4M}$ (McKinsey's logic), $\lgc{S4}$, and $\lgc{S4.3}$.  However, failure of coherence \emph{does not} follow from this theorem for logics such as $\lgc{GL}$, $\lgc{S4Grz}$, and $\lgc{S4.3Grz}$ that admit finite chains but are not canonical. It is known  that $\lgc{GL}$ (see~\cite{Sha93}) and $\lgc{S4Grz}$ (see~\cite{Vis96}) have uniform interpolation and are therefore coherent; indeed, it was proved in~\cite{GZ02} that the first-order theories of the varieties corresponding to these logics admit model completions. It was also proved in~\cite{GZ02} that the first-order theory of the variety corresponding to $\lgc{S4.3Grz}$ does not have a model completion, but it is not clear (at least to the present authors) if the proof given there also establishes the failure of coherence.  A general negative result proved in~\cite{GZ02} demonstrates that for any logic $\lgc{L}$ extending $\lgc{K4}$ that has the finite model property and admits all finite reflexive chains and the two-element cluster, $\V_{\lgc{L}}$ does not have a model completion. An analysis of this proof reveals that such logics are also not coherent. Clearly, there is a large overlap between this negative result and Theorem~\ref{t:main} (but intriguingly no inclusion either way), although our proofs are arguably simpler, since they require only finding a suitable term. Let us emphasize also that our method applies not only to modal logics but also to many other families of non-classical logics and varieties of algebras.

Let us remark finally that it would be rather easy to state the results of this section in a more general way for arbitrary varieties of BAOs. In any such variety, a great range of unary operators can be term-defined. Let $\boxtimes$ be one of them. Then we can define $t(x,y,z) = \boxtimes(y\lor\boxtimes(z\lor x))\lor x$, reformulate the condition of admitting finite chains algebraically, and state suitable analogues of Theorem~\ref{t:fin-chain} and~\ref{t:main}. Such an approach, however, would lack the simplicity and elegance of the presentation given here just for modal logics.



\bibliographystyle{aiml18}
\begin{bibdiv}
\begin{biblist}
\bib{Bil07}{article}{
  title={Uniform interpolation and propositional quantifiers in modal logics},
  author={B{\'i}lkov{\'a}, M.},
  journal={Studia Logica},
  volume={85},
  pages={1--31},
  year={2007},
}

\bib{CLL73}{article}{
  author={Choo, K. G.},
  author={Lam, K. Y.},
  author={Luft, E.},
  title={On free product of rings and the coherence property},
  booktitle={Algebraic {K}-theory, {II}: ``{C}lassical'' algebraic {K}-theory and connections with arithmetic ({P}roc. {C}onf., {B}attelle {M}emorial {I}nst., {S}eattle, {W}ash., 1972)},
  pages={135--143. Lecture Notes in Math., Vol. 342},
  publisher={Springer, Berlin},
  year={1973},
}

\bib{Cze85}{article}{
  author={Czelakowski, J.},
  title={Sentential logics and Maehara interpolation property},
  journal={Studia Logica},
  date={1985},
  volume={44},
  number={3},
  pages={265--283},
}

\bib{dAH00}{article}{
  author={D'Agostino, G.},
  author={Hollenberg, M.},
  title={Logical questions concerning the $\mu $-calculus},
  journal={J. Symb. Log.},
  volume={65},
  number={1},
  pages={310--332},
  year={2000},
}

\bib{dAg05}{article}{
  author={D'Agostino, G.},
  title={Uniform interpolation, bisimulation quantifiers, and fixed points},
  booktitle={Proc. TbiLLC'05},
  pages={96--116},
  year={2005},
}

\bib{ES70}{article}{
  author={Eklof, P.},
  author={Sabbagh, G.},
  title={Model-completions and modules},
  journal={Ann. Math. Logic},
  volume={2},
  date={1970/1971},
  number={3},
  pages={251--295},
}

\bib{Esa74}{article}{
  author={Esakia, L.L.},
  title={Topological Kripke models},
  language={Russian},
  journal={Dokl. Akad. Nauk SSSR},
  volume={214},
  date={1974},
  pages={298--301},
  issn={0002-3264},
}

\bib{Geh14}{article}{
  author={Gehrke, M.},
  title={Canonical extensions, Esakia spaces, and universal models},
  conference={ title={Leo Esakia on duality in modal and intuitionistic logics}, },
  book={ series={Outst. Contrib. Log.}, volume={4}, publisher={Springer, Dordrecht}, },
  date={2014},
  pages={9--41},
}

\bib{GH01}{article}{
  author={Gehrke, M.},
  author={Harding, J.},
  title={Bounded lattice expansions},
  journal={J. Algebra},
  volume={238},
  date={2001},
  number={1},
  pages={345--371},
  issn={0021-8693},
}

\bib{GLW06}{article}{
  author={Ghilardi, S.},
  author={Lutz, C.},
  author={Wolter, F.},
  title={Did {I} damage my ontology? A case for conservative extensions in description logics},
  booktitle={Proc. KR'06},
  pages={187--197},
  publisher={AAAI Press},
  year={2006},
}

\bib{GZ02}{book}{
  address={Dordrecht},
  author={Ghilardi, S.},
  author={Zawadowski, M.},
  publisher={Springer},
  title={{Sheaves, Games, and Model Completions}: A Categorical Approach to Nonclassical Propositional Logics},
  year={2002},
}

\bib{GZ95}{article}{
  author={Ghilardi, S.},
  author={Zawadowski, M.},
  title={Undefinability of propositional quantifiers in the modal system ${\rm S4}$},
  journal={Studia Logica},
  volume={55},
  date={1995},
  number={2},
  pages={259--271},
}

\bib{vGMT17}{article}{
  author={van Gool, S.},
  author={Metcalfe, G.},
  author={Tsinakis, C.},
  title={Uniform interpolation and compact congruences},
  journal={Ann. Pure Appl. Logic},
  volume={168},
  pages={1827--1948},
  year={2017},
}

\bib{Gou92}{article}{
  author={Gould, V.},
  title={Coherent monoids},
  journal={J. Austral. Math. Soc. Ser. A},
  fjournal={Australian Mathematical Society. Journal. Series A. Pure Mathematics and Statistics},
  volume={53},
  year={1992},
  number={2},
  pages={166--182},
}

\bib{Hig61}{article}{
  author={Higman, G.},
  title={Subgroups of finitely presented groups},
  journal={Proc. Roy. Soc. Ser. A},
  volume={262},
  date={1961},
  pages={455--475},
}

\bib{Jon65}{article}{
  author={J\'onsson, B.},
  title={Extensions of relational structures},
  book={ title={Proc. International Symposium on the Theory of Models}, address={Berkeley}, },
  date={1965},
  pages={146-157},
}

\bib{Jon94}{article}{
  author={J\'onsson, B.},
  title={On the canonicity of Sahlqvist identities},
  journal={Studia Logica},
  volume={53},
  year={1994},
  pages={473--491},
}

\bib{KK06}{article}{
  author={Kowalski, T.},
  author={Kracht, M.},
  title={Semisimple varieties of modal algebras},
  journal={Studia Logica},
  fjournal={Studia Logica. An International Journal for Symbolic Logic},
  volume={83},
  year={2006},
  number={1-3},
  pages={351--363},
}

\bib{KM18}{article}{
  author={Kowalski, T.},
  author={Metcalfe, G.},
  title={Uniform interpolation and coherence},
  note={Submitted, available to download from \url {https://arxiv.org/abs/1803.09116}},
}

\bib{LW11}{article}{
  author={Lutz, C.},
  author={Wolter, F.},
  title={Foundations for uniform interpolation and forgetting in expressive description logics},
  booktitle={Proc. IJCAI'11},
  pages={989--995},
  year={2011},
}

\bib{Mak77}{article}{
  author={Maksimova, L.L.},
  title={Craig's theorem in superintuitionistic logics and amalgamable varieties of pseudo-Boolean algebras},
  journal={Algebra Logika},
  volume={16},
  date={1977},
  pages={643--681},
}

\bib{MSV15}{article}{
  author={Marti, J.},
  author={Seifan, F.},
  author={Venema, Y.},
  title={Uniform interpolation for coalgebraic fixpoint logic},
  book={ title = {CALCO}, series = {LIPIcs}, volume = {35}, publisher = {Schloss Dagstuhl -- Leibniz-Zentrum f{\"u}r Informatik}, year = {2015},},
  pages={238--252},
}

\bib{MMT14}{article}{
  title={Amalgamation and interpolation in ordered algebras},
  author={Metcalfe, G.},
  author={Montagna, F.},
  author={Tsinakis, C.},
  journal={J. Algebra},
  volume={402},
  pages={21--82},
  year={2014},
}

\bib{Mor18}{article}{
  author={Moraschini, T.},
  title={Varieties of positive modal algebras and structural completeness},
  note={Submitted},
}

\bib{Pig72}{article}{
  author={Pigozzi, D.},
  title={Amalgamations, congruence-extension, and interpolation properties in algebras},
  journal={Algebra Universalis},
  volume={1},
  date={1972},
  pages={269--349},
}

\bib{Pit92}{article}{
  author={Pitts, A.M.},
  title={On an interpretation of second-order quantification in first-order intuitionistic propositional logic},
  journal={J. Symbolic Logic},
  year={1992},
  volume={57},
  pages={33--52},
}

\bib{Sch83}{article}{
  author={Schmidt, P.H.},
  title={Algebraically complete lattices},
  journal={Algebra Universalis},
  volume={17},
  date={1983},
  pages={135--142},
}

\bib{Sha93}{book}{
  author={Shavrukov, V.Yu.},
  title={Subalgebras of diagonizable algebras theories containing arithmetic},
  series={Dissertationes Mathematicae},
  volume={323},
  publisher={Polska Akademia Nauk., Instytut Matematyczny, Warsaw},
  date={1993},
}

\bib{Vis96}{article}{
  title={Uniform interpolation and layered bisimulation},
  author={Visser, A.},
  book={ title={G{\"o}del '96: Logical foundations on mathematics, computer science and physics --- Kurt G{\"o}del's legacy}, editor={H{\'a}jek, P.}, publisher={Springer Verlag}},
  year={1996},
}

\bib{Whe76}{article}{
  author={Wheeler, W.H.},
  title={Model-companions and definability in existentially complete structures},
  journal={Israel J. Math.},
  year={1976},
  volume={25},
  number={3},
  pages={305--330},
}

\bib{Whe78}{article}{
  author={Wheeler, W.H.},
  title={A characterization of companionable, universal theories},
  journal={J. Symbolic Logic},
  year={1978},
  volume={43},
  number={3},
  pages={402--429},
}

\bib{Wol93}{thesis}{
  type={PhD Dissertation},
  author={Wolter, F.},
  title={Lattices of Modal Logics},
  school={Freie Universit\"at Berlin},
  year={1993},
}

\bib{ZWC01}{article}{
  author={Zakharyaschev, M.},
  author={Wolter, F.},
  author={Chagrov, A.},
  title={Advanced modal logic},
  conference={ title={Handbook of philosophical logic, Vol.\ 3}, },
  book={ publisher={Kluwer Acad. Publ., Dordrecht}, },
  date={2001},
  pages={83--266},
}

\end{biblist}
\end{bibdiv}

\end{document}